\newcommand{\V}{{\mathfrak V}}
\newcommand{\Ni}{{\mathfrak N}}
\newcommand{\X}{{\mathfrak X}}
\newcommand{\Y}{{\mathfrak Y}}
\newcommand{\U}{{\mathfrak U}}
\newcommand{\A}{{\mathfrak A}}
\newcommand{\1}{\{1\}}
\newcommand{\var}[1]{\mathrm{var}\left( #1 \right)}
\newcommand{\varr}[1]{\mathrm{var}( #1 )}
\newcommand{\Wr}{\,\mathrm{Wr}\,}
\newcommand{\Wrr}{\,\mathrm{wr}\,}
\theoremstyle{definition}
\theoremstyle{plain}
\newtheorem{Lemma}{\sc Lemma}
\newtheorem{Theorem}{\sc Theorem}
\theoremstyle{remark}
\newtheorem{Remark}{\sc Remark} 
\newtheorem{Example}{\sc Example}
\begin{document}

$\phantom{Line}$ 
\vskip-4mm


\subjclass{Primary: 20E22. Secondary: 20E10, 20K01, 20K25, 20D15.}
\keywords{Wreath products, varieties of groups, products of varieties of groups, nilpotent varieties,  abelian groups, nilpotent groups, $K_p$-series, nilpotent wreath products, $p$-groups.}

\title[On $K_p$-series and varieties]{\bf On $K_p$-series and varieties generated by wreath products of $p$-groups}

\author[Vahagn H.~Mikaelian]{\bf Vahagn H.~Mikaelian}

\thanks{The author was supported in part by joint grant 15RF-054 of RFBR and SCS MES RA, and by 15T-1A258 grant of SCS MES RA}

\thanks{This article reflects the main plenary  talk at the  \href{http://u.math.biu.ac.il/~vishne/Conferences/Plotkin90/program.html}{\it Groups, Algebras and Identities} conference honoring Professor Boris I.~Plotkin's 90th birthday in Jerusalem, Israel, March 20-24, 2016.}

\date{\today}

\dedicatory{\small To Professor Boris I.~Plotkin on his 90'th anniversary}

\begin{abstract}
Let $A$ be a nilpotent $p$-group of finite exponent, and $B$ be an abelian $p$-groups of finite exponent for a given prime number $p$. Then the wreath product $A \Wr B$ generates the variety $\var{A} \var{B}$ if and only if the group $B$ contains a subgroup isomorphic to the direct product $C_{p^v}^\infty$ of countably many copies of the  cycle $C_{p^v}$ of order $p^v = \exp{(B)}$.
The obtained theorem continues our previous study of cases when $\var{A \Wr B} = \var{A} \var{B}$ holds for some other classes of groups $A$  and $B$ (abelian groups, finite groups, etc.).
\end{abstract}

\maketitle


\section{Introduction and background information}
\label{Introduction}

\noindent
Nilpotent and locally nilpotent subgroups are proven to be very efficient means to study groups.
In particular, the Fitting subgroup and the Frattini subgroup help in study of finite groups, while the Plotkin-Hirsch radical allows to consider infinite groups (see \cite{Fitting Gruppen endlicher Ordnung, Gaschuetz Frattini, Plotking - radical, Plotking - Radical groups, Plotking - Generalized soluble and nilpotent groups, Uber lokal-nilpotente Gruppen} and literature cited therein). 
This trend is naturally inherited by varieties of groups: nilpotent varieties are in some sense simpler varieties (they are irreducible, have finite base rank, their finitely generated groups are factors of finite powers of relatively free groups, etc.), and investigation of nilpotent subvarieties of a variety is an approach to study general varieties.
In the current note we use sequences of nilpotent subvarieties to study varieties generated by non-nilpotent wreath products of groups (here we assume Cartesian wreath products, although the analogs of the statements below also are true for direct wreath products). 

Let us introduce the general context in which we examine varieties generated by wreath products. 
One of the most efficient methods to study product varieties $\U\V$ is finding some groups $A$ and $B$ such that $\U = \var A$, $\V = \var B$, and the wreath product $A \Wr B$ generates $\U\V$, that is, when the equality 
\begin{equation}
\tag{$*$}
\label{EQUATION_main}    
\var{A \Wr B} = \var{A} \var{B}
\end{equation}
holds for the given $A$ and $B$ (as usual, we denote by $\var{X}$ the variety generated by the group $X$). 
Indeed, the product $\U \V$ consists of all possible extensions of all groups $A \in \U$ by all groups $B \in \V$. If \eqref{EQUATION_main} holds for some fixed groups $A$ and $B$, generating the varieties $\U$ and $\V$, then one can restrict to consideration of $\var{A \Wr B}$, which is easier to study rather than to explore all the extensions inside $\U \V$.
Examples of application of this approach are numerous: for earliest results and references see Chapter 2 of Hanna Neumann's monograph~\cite{HannaNeumann} and the work of G.~Baumslag, R.G.~Burns, G.~Higman, C.~Haughton, B.H.~Neumann,  H.~Neumann, P.M.~Neumann~\cite{Burns65, B+3N,  Some_remarks_on_varieties}, etc. 

This motivated our systematic study of equality \eqref{EQUATION_main} for as wide classes of groups as possible. 
In \cite{AwrB_paper}--\cite{Metabelien} we gave a complete classification of all cases, when \eqref{EQUATION_main} holds for {\it abelian} groups $A$ and $B$, and in \cite{wreath products algebra i logika} and \cite{shmel'kin criterion} we fully classified the cases when $A$ and $B$ are any {\it finite} groups.

In the current note we consider the case, when $A$ and $B$ are $p$-groups of finite exponents for a prime $p$, the group  $A$ is nilpotent, and $B$ is abelian. 
Then:

\begin{Theorem}
\label{Theorem wr p-groups}
Let $A$ be a non-trivial nilpotent $p$-group of finite exponent, and $B$ be a non-trivial  abelian group of finite exponent $p^v$ for a prime number $p$. Then the wreath product $A \Wr B$ generates the variety $\var{A} \var{B}$, that is, the variety $\var{A} \A_{p^v}$ if and only if the group $B$ contains a subgroup isomorphic to the direct product $C_{p^v}^\infty$ of countably many copies of the cyclic group $C_{p^v}$ of order $p^v$.
\end{Theorem}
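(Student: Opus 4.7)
The strategy is to split the biconditional into its two halves, both anchored by the standard inclusion $\var{A\Wr B}\subseteq \var A\,\var B=\var A\,\A_{p^v}$, where the second equality uses that $B$ is abelian of exponent exactly $p^v$ (so $\var B=\A_{p^v}$). The whole theorem therefore reduces to deciding when the reverse inclusion $\var A\,\A_{p^v}\subseteq \var{A\Wr B}$ holds.

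For sufficiency, suppose $C_{p^v}^\infty\le B$. For any subgroup $B_0\le B$ the subgroup of functions in the Cartesian base $A^B$ supported on $B_0$ is stable under the top action of $B_0$, so there is a natural embedding $A\Wr B_0\hookrightarrow A\Wr B$; in particular $\var{A\Wr C_{p^v}^\infty}\subseteq \var{A\Wr B}$. It then suffices to prove $\var A\,\A_{p^v}\subseteq \var{A\Wr C_{p^v}^\infty}$. The natural tool here is a Kaluzhnin-Krasner/Shmel'kin-type embedding: every group of $\var A\,\A_{p^v}$ is an extension of a group of $\var A$ by a group of $\A_{p^v}$, and the free group $F_n(\var A\,\A_{p^v})$ embeds into $F_n(\var A)\Wr C_{p^v}^n$. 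Because $C_{p^v}^n\le C_{p^v}^\infty$, and because $A$ itself already discriminates $\var A$ while $C_{p^v}^\infty$ discriminates $\A_{p^v}$, any word $w$ which fails on some $H\in\var A\,\A_{p^v}$ can be pulled back to a failing substitution in $A\Wr C_{p^v}^\infty$, giving the desired inclusion.

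For necessity, assume $B$ does not contain $C_{p^v}^\infty$. By Pr\"ufer's theorem $B$ is a direct sum of cyclic $p$-groups, and only finitely many of those summands, say $m$, have maximal order $p^v$. The task is to exhibit a word in variables $x,y_1,\dots,y_k$ that is a law on $A\Wr B$ but not on $\var A\,\A_{p^v}$. The natural candidate is built from the commutator $[x,y_1,\dots,y_k]$: for $a$ in the base group $A^B$ and $b_i\in B$ this commutator is exactly the action of the augmentation-ideal element $(b_1-1)\cdots(b_k-1)\in\Z[B]$ on $a$. As soon as $k$ exceeds $m$, the finiteness of the number of $p^v$-summands of $B$ forces $(b_1-1)\cdots(b_k-1)$ to lie in a deep enough term of the $K_p$-filtration of the augmentation ideal of $\Z[B]$, and combining this with the nilpotency class and exponent of $A$ forces a suitable power of such a commutator to be trivial in $A\Wr B$. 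The same word is then shown to be nontrivial in $\var A\,\A_{p^v}$ by exhibiting it on a free group of the product variety of sufficiently large rank.

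The main obstacle is the necessity direction: pinning down the exact commutator-power word that is killed by the interaction between the bounded count $m$ of $p^v$-summands of $B$, the $K_p$-filtration of $\Z[B]$, and the $K_p$-series of the nilpotent $p$-group $A$, and then verifying that this word truly fails in $\var A\,\A_{p^v}$. This balance between the $K_p$-structure of $\Z[B]$ and that of $A$ is precisely what the $K_p$-series machinery advertised in the title is designed to exploit; sufficiency, by contrast, is essentially a routine transfer argument once the embedding $A\Wr C_{p^v}^\infty\hookrightarrow A\Wr B$ has been noted.
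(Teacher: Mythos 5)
Your sufficiency argument is essentially the paper's: everything rests on the fact that $C_{p^v}^\infty$, and hence $B$ (by 17.44 in Hanna Neumann's book), discriminates $\A_{p^v}$, after which the theorem of Baumslag--Neumann--Neumann--Neumann (22.42--22.44 there) yields $\var{A\Wr B}=\var{A}\A_{p^v}$ at once. One small correction: your aside that ``$A$ itself already discriminates $\var{A}$'' is false in general (already $C_2$ fails to discriminate $\A_2$), but the cited theorem does not need it --- only the top group has to discriminate its variety.

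The necessity half is where the genuine gap lies, and the plan you sketch cannot be repaired in the form given. You hope for a fixed separating word, a power of $[x,y_1,\dots,y_k]$ with $k$ slightly exceeding the number of $C_{p^v}$-summands of $B$. But for every $k\ge 1$ and every $G\in\var{A}\A_{p^v}$ the value of $[x,y_1]$ already lies in a normal subgroup belonging to $\var{A}$, so $[x,y_1,\dots,y_k]^{p^u}=1$ (with $p^u=\exp(A)$) is a law of the \emph{entire} product variety, while no smaller power is a law of $A\Wr B$ (evaluate on a base function supported at a single point). Concretely, for $A=C_p$ and $B=C_{p^2}\times C_p^\infty$ the word $[x,y_1,y_2]^{p}$ is a law of both $\var{A\Wr B}$ and $\A_p\A_{p^2}$, so no word of your proposed shape separates them. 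The deeper obstruction is that these two varieties cannot be told apart by any law in a number of variables fixed in advance of $A$ and $l$: both are non-nilpotent (Baumslag's criterion), and the difference appears only in how fast the maximal nilpotency class of $t$-generator groups grows with $t$. That is exactly what the paper measures with Shield's formula: a $t$-generator subgroup of $A\Wr B$ embeds via Kaloujnine--Krasner into some $H\Wr B'$ with $B'\le Z(l,t)$, and Lemma~\ref{LEMMA bound for A beta Wr Z} bounds its class by \eqref{EQUATION bound 1}, in which the coefficient of $t$ is $c(p^{v-1}-1)$ because only $l$ of the cyclic factors of $B'$ can have order $p^v$; meanwhile $\tilde A\Wr Y(z,t)\in\var{A}\var{B}$ is a $t$-generator group of class \eqref{EQUATION bound 2}, with the strictly larger coefficient $c(p^{v}-1)$ of $t$. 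For large $t$ the latter exceeds the former, and the varieties differ. So your $K_p$-series instinct points at the right machinery, but it has to be applied to the $t$-dependent nilpotency-class bounds as $t\to\infty$, not to a single commutator word of bounded length.
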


Since $B$ is a non-trivial group of finite exponent, we have $v >0$, and by Pr\"ufer-Kulikov's theorem~\cite{Robinson, Kargapolov Merzlyakov} it is a direct product of copies of some finite cyclic subgroups of prime-power orders. The theorem above states that in this direct product the cycles of order $p^v$ must be present at lest {\it countably many} times, whereas the number of direct summands of orders $p^{v-1}\!, p^{v-2}\!, \ldots, p$ is of no importance.

We below without any definitions  use the basic notions of the theory of varieties of groups such as varieties, relatively free groups, discriminating groups, etc. All the necessary definitions and background information can be found in Hanna Neumann's monograph~\cite{HannaNeumann}. Following the conventional notations, we denote by  ${\sf Q}\X$, ${\sf S}\X$, ${\sf C}\X$ and  ${\sf D}\X$, the classes of all homomorphic images, subgroups, Cartesian products and of direct products of finitely many groups of $\X$ respectively. By Birkhoff's Theorem~\cite{BirkhoffQSC,HannaNeumann} for any class  of groups $\X$ the variety $\var{\X}$ generated by it can be obtained from $\X$ by three operations: $\varr{\X}={\sf QSC}\,\X$.
For information on wreath products we refer to~\cite{HannaNeumann, Kargapolov Merzlyakov}. For the given classes of groups $\X$ and $\Y$ we denote $\X \Wr \Y = \{ X\Wr Y \,|\, X\in \X, Y\in \Y\}$. 
The specific notions, related to $K_p$-series and to nilpotent wreath products can be found in
\cite{Liebeck_Nilpotency_classes,
Meldrum nilpotent, 
Shield nilpotent a,
Shield nilpotent b,
Marconi nilpotent}
and in Chapter 4 of J.D.P.~Meldrum's monograph \cite{Meldrum book}.

When this work was in progress we discussed the topic with Prof. A.Yu.~Ol'shanskii, who suggested ideas for an alternative (very different from our approach) proof for Theorem~\ref{Theorem wr p-groups} using the arguments of his work \cite{Olshanskii Neumanns-Shmel'kin}. In~\cite{classification theorem} we present the outlines of both proofs side by side.

\section{The $K_p$-series and the proof for Theorem~\ref{Theorem wr p-groups}}
\label{k_p series}

In spite of the fact that the soluble wreath products are ``many'' (the wreath product of any soluble group is soluble), the nilpotent wreath products are ``fewer'': as it is proved by G.~Baumslag in 1959, a Cartesian or direct wreath product of non-trivial groups $A$ and $B$ is nilpotent if and only if $A$ is a nilpotent $p$-group of finite exponent, and $B$ is a finite $p$-group~\cite{Baumslag nilp wr}. 

Even after having such an easy-to-use criterion to detect, if the given wreath product is nilpotent, it turned out to be a much harder task and took almost two decades to explicitly compute its nilpotency class in general case. 
H.~Liebeck started by consideration of the cases of wreath products of abelian groups~\cite{Liebeck_Nilpotency_classes}, and the final general formula was found in D.~Shield's work~\cite{Shield nilpotent a, Shield nilpotent b} in 1977. Later the proof was much shortened by R.~Marconi~\cite{Marconi nilpotent}.  

In order to write down the formula we need the notion of $K_p$-series. For the given group $G$ and the prime number $p$ the $K_p$ series $K_{i,p}(G)$ of $G$ is defined for $i=1, 2, \ldots$ by:
\begin{equation}
\label{EQUATION K_p}   
K_{i,p}(G) = 
\!\!\!\!\!\!\!\!\!\!\!
\prod_{\text{$r, j$ with $r p^j \ge i$}} 
\!\!\!\!\!\!\!\!\!\!
\gamma_r(G)^{p^j}\!\!,
\end{equation}
where $\gamma_r(G)$ is the $r$'th term of the lower central series of $G$. 

In particular, $K_{1,p}(G) = G$ holds for any $G$. From definition it is clear that a $K_p$ series is a descending series, although it may not be strictly descending: some of its neighbor terms may coincide. If $G$ is abelian, then in \eqref{EQUATION K_p} the powers $\gamma_1(G)^{p^j}=G^{p^j}$ of the initial term $\gamma_1(G) = G$ need be considered only. 

\begin{Example}
If $G=C_{p^3} \times C_{p} \times C_{p}$ with $p = 5$, then it is easy to calculate that:
$$
K_{1,p}(G) = G; 
\quad
K_{2,p}(G) = \cdots = K_{5,p}(G) \cong C_{p^2}; 
$$
$$
K_{6,p}(G) = \cdots = K_{25,p}(G) \cong C_{p};
\quad
K_{26,p}(G) \cong \{1\}.
$$
\end{Example}

If $G$ is some finite $p$-group, using the $K_p$-series one may introduce the following additional parameters:
let $d$ be the maximal integer such that $K_{d,p}(G) \not= \{1\}$. Then for each $s=1,\ldots, d$ define $e(s)$ by
$$
p^{e(s)} = |K_{s,p} / K_{s+1,p}|,
$$
and set $a$ and $b$ by the rules:
$$
a = 1 + (p-1) \sum_{s=1}^d \left(s \cdot e(s)\vphantom{a^b}\right),
\quad
b = (p-1)d.
$$
The above integer  $d$ does exist, and our notations are correct, for, a finite $p$-group is nilpotent, and its $K_p$-series will eventually reach the trivial subgroup. To keep the notations simpler, the initial group $G$ is not included in the notations of $d$, $e(s)$, $a$ and $b$. But from the context it will always be clear which group $G$ is being considered.

\vskip3mm
Let $A$ be a nilpotent $p$-group of exponent $p^u$, and $B$ be an abelian group of exponent $p^v$, with $u,v >0$. 
Assume all the parameters $d$, $e(s)$, $a$, $b$ are defined specifically for the group $B$.
Then by Shield's formula \cite{Shield nilpotent b} (see also Theorem 2.4 in \cite{Meldrum book}) the nilpotency class of the wreath product $A \Wr B$ is the maximum
\begin{equation}
\label{EQUATION A wr B class}    
\max_{h = 1, \ldots, \, c} \{
a \, h + (s(h)-1)b
\},
\end{equation}
where $s(h)$ is defined as follows: $p^{s(h)}$ is the exponent of the $h$'th term $\gamma_h(A)$ of the lower central series of $A$. 

\begin{Example}
If (again for $p = 5$) $B$ is the group $C_{p^3} \times C_{p} \times C_{p}$ mentioned in previous example, and if $A$ is the group, say, $C_{p^2} \times C_{p}$, then 
$d = 25$; 
$e(1) = 3$, 
$e(2) = e(3) =  e(4) = 0$,  
$e(5) = 1$, 
$e(6) = \cdots =  e(24) = 0$,
$e(25) = 1$, 
$e(26) = 0$;
$a = 133$;
$b = 100$;
$c = 1$ 
and $s(1) = 2$.
Thus, the nilpotency class of the wreath product 
$A \Wr B = (C_{p^2} \times C_{p}) \Wr (C_{p^3} \times C_{p} \times C_{p})$ 
in this case is equal to
$$
a \cdot 1 + (s(1) - 1)b =  133 + (2-1) 100 = 233.
$$
\end{Example}

\vskip3mm
In order to prove Theorem~\ref{Theorem wr p-groups} we will apply Shield's formula to two auxiliary groups.
To construct the first group denote by $\beta$ the cardinality of $B$ and by $A^\beta$ the Cartesian product of $\beta$ copies of $A$. For the given fixed positive integer $l$ and for the integer $t \ge l$ introduce the group $Z(l,t)$ as the direct product:
\begin{equation}
\label{EQUATION A Wr Z definition}    
Z(l,t) =\underbrace{C_{p^v} \times \cdots \times C_{p^v}}_l  
\,\, \times \,\,
\underbrace{C_{p^{v-1}} \times \cdots \times C_{p^{v-1}}}_{t-l}.
\end{equation}

\begin{Lemma}
\label{LEMMA bound for A beta Wr Z}
Assume $A$, $B$ and $\beta$ are defined as above and $l$ is any positive integer. Also, assume the exponent of  $\gamma_c(A)$ is $p^\alpha$ ($\alpha \not= 0$, since the class of $A$ is c). Then there is a positive integer $t_0$ such that for all $t > t_0$ the nilpotency class of the wreath product
$A^\beta \Wr Z(l,t)$ is equal to
\begin{equation}
\label{EQUATION bound 1}    
c + c\,t(p-1)\left( 
\frac{1 - p^{v-1}}{\hskip-4mm 1-p} +l/t \cdot p^{v-1}
\right)
+ (\alpha-1)(p-1)p^{v-1}.
\end{equation}
\end{Lemma}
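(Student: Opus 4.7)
The plan is to apply Shield's formula~\eqref{EQUATION A wr B class} directly to $A^\beta \Wr Z(l,t)$, compute the $K_p$-parameters of the abelian active group $Z = Z(l,t)$, and verify that, for sufficiently large $t$, the maximum in~\eqref{EQUATION A wr B class} is attained at $h = c$.

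Since $Z$ is abelian, only the powers $Z^{p^j}$ contribute to~\eqref{EQUATION K_p}, so $K_{i,p}(Z) = Z^{p^{\lceil \log_p i \rceil}}$. In our $Z$ the direct factor $C_{p^v}^l$ survives after raising to $p^{v-1}$, while $C_{p^{v-1}}^{t-l}$ has already become trivial. Thus $d = p^{v-1}$, and the only non-trivial jumps of the $K_p$-series occur at $s = p^{j-1}$ for $j = 1, \ldots, v$, with $e(p^{j-1}) = t$ for $j = 1, \ldots, v-1$ and $e(p^{v-1}) = l$. Substituting into the definitions yields $b = (p-1)p^{v-1}$ and
\begin{equation*}
a = 1 + (p-1)\!\left( t \sum_{j=1}^{v-1} p^{j-1} + l\, p^{v-1} \right) = 1 + t(p^{v-1} - 1) + l(p-1)p^{v-1}.
\end{equation*}

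For the passive group $A^\beta$ the lower central series commutes with direct products, so $\gamma_h(A^\beta) = \gamma_h(A)^\beta$ has the same exponent as $\gamma_h(A)$, and the sequence $s(h)$ required in~\eqref{EQUATION A wr B class} is the same as for $A$. In particular $s(c) = \alpha$, the $s(h)$ are non-increasing in $h$, and $s(h) \le u$, where $p^u = \exp(A)$. The candidate value at $h = c$ is
\begin{equation*}
a\, c + (\alpha - 1)b = c + c\, t (p^{v-1} - 1) + c\, l (p-1) p^{v-1} + (\alpha - 1)(p-1)p^{v-1},
\end{equation*}
and rewriting $p^{v-1} - 1 = (p-1) \cdot \frac{1 - p^{v-1}}{1-p}$ and pulling an explicit $t$ out of the middle two summands identifies this with~\eqref{EQUATION bound 1}.

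It remains to show this is indeed the maximum. For every $h < c$,
\begin{equation*}
\bigl(a\, c + (\alpha - 1) b\bigr) - \bigl(a\, h + (s(h) - 1) b\bigr) = a(c - h) - (s(h) - \alpha) b \ \ge\ a - (u - \alpha) b,
\end{equation*}
and since $a$ grows linearly in $t$ while $b$ and $u - \alpha$ are independent of $t$, there exists a threshold $t_0$ beyond which the difference is non-negative for all $h < c$. The main obstacle is the $K_p$-bookkeeping for $Z(l,t)$: tracking that the final jump at $s = p^{v-1}$ contributes only $l$ rather than $t$, as this is precisely what produces the $\frac{l}{t}\, p^{v-1}$ summand in~\eqref{EQUATION bound 1} and is what will later let $l$ be tuned independently of $t$ in the proof of Theorem~\ref{Theorem wr p-groups}.
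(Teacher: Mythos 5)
Your proposal is correct and follows essentially the same route as the paper: compute the $K_p$-series of $Z(l,t)$ to get $d=p^{v-1}$, $e(p^{j-1})=t$ for $j=1,\ldots,v-1$ and $e(p^{v-1})=l$, hence the stated $a$ and $b$, then apply Shield's formula and argue that for large $t$ the maximum is attained at $h=c$ (your explicit lower bound $a-(u-\alpha)b$ for the difference is a slightly more quantitative version of the paper's argument). One small correction: for an infinite Cartesian power the lower central series does not commute with the product, i.e.\ in general only the inclusion $\gamma_h(A^\beta)\le \left(\gamma_h(A)\right)^\beta$ holds rather than equality; however, since $\gamma_h(A^\beta)$ still contains a copy of $\gamma_h(A)$ in a single coordinate, the exponents and hence the parameters $s(h)$ agree, which is all your argument actually uses --- the paper states exactly this more careful version.
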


\begin{proof}
Denote $Z = Z(l,t)$ and notice that $A^\beta \Wr Z$ is nilpotent by Baumslag's theorem~\cite{Baumslag nilp wr}. Let us compute the $K_{p}$-series for $Z$ and, to keep the notations simpler, not include in the formulas the underbraces of \eqref{EQUATION A Wr Z definition} with $l$ and $t-l$.
For $i=1$ we have $K_{1,p}(Z) = Z$. 
For $i=2, \ldots, p$ we get:
$$
K_{i,p}(Z) = Z^p =C_{p^{v-1}} \times \cdots \times C_{p^{v-1}}  
\,\, \times \,\,\, 
C_{p^{v-2}} \times \cdots \times C_{p^{v-2}}.
$$
For $i=p^k+1, \ldots, p^{k+1}$ we have:
$$
K_{i,p}(Z) = Z^{p^{k+1}} =C_{p^{v-(k+1)}} \times \cdots \times C_{p^{v-(k+1)}}  
\,\, \times \,\,\, 
C_{p^{v-(k+2)}} \times \cdots \times C_{p^{v-(k+2)}}.
$$
In particular, for $i=p^{v-3}+1, \ldots, p^{v-2}$ we get:
$$
K_{i,p}(Z) = Z^{p^{v-2}} =C_{p^2} \times \cdots \times C_{p^2}  
\,\, \times \,\,\, 
C_{p} \times \cdots \times C_{p},
$$
for $i=p^{v-2}+1, \ldots, p^{v-1}$ we get:
$$
K_{i,p}(Z) = Z^{p^{v-1}} =C_{p} \times \cdots \times C_{p}  
\quad \text{(just $l$ factors)},
$$
and, finally, for $i=p^{v-1}+1$ the series terminates on $K_{i,p}(Z) = \1$.

Therefore, $d = p^{v-1}$ and all the parameters $e(i)$ are zero except the following ones:
$$
e(1) = e(p) = \cdots = e(p^{u-2}) = t
\quad
\text{and}
\quad
e(p^{u-1}) = l.
$$
Thus:
$$
a = 1 + (p-1)(t + p t + \cdots + p^{v-2} t + p^{v-1} l) 
$$
$$
= 1 + t(p-1)\left( 
\frac{1 - p^{v-1}}{\hskip-4mm 1-p} +l/t \cdot p^{v-1}
\right)
$$
and
$$
b = (p-1)d=(p-1)p^{v-1}.
$$

To deal with the parameters $s(h)$, $h = 1, \ldots, c$, for $A^\beta$ notice that $\gamma_h(A^\beta)$ is a subgroup of the Cartesian power $\left(\gamma_h(A)\right)^\beta$ (they may not be equal if  $\beta$ is infinite) and, on the other hand, $\gamma_h(A^\beta)$ contains elements with exponent equal to the exponent of $\left(\gamma_h(A)\right)^\beta$. Therefore, the exponents of $\gamma_h(A^\beta)$ and of $\gamma_h(A)$ are equal for all $h=1,\ldots,c$, and the parameters $s(h)$ are the same for both $\gamma_h(A^\beta)$ and of $\gamma_h(A)$. 

By Shield's formula the nilpotency class of $A^\beta \Wr Z$ is the maximum of values 
\begin{equation}
\label{EQUATION two summands}    
a \, h + (s(h)-1)b, \quad h = 1, \ldots, c.
\end{equation}

In spite of the fact that having a larger $h$ we get a larger summand $a h$, it may turn out that for some $h < c$ the exponent of $\gamma_h(A)$ is so much larger than the exponent of $\gamma_c(A)$ that for the given $t$  the highest value of \eqref{EQUATION two summands} is achieved not for $h = c$ (examples are easy to build). However, the  second summand in \eqref{EQUATION two summands} may get just $c$ distinct values not dependent on $t$, whereas the first  summand includes $a$, which grows infinitely and monotonically with $t$. Thus, even with a fixed $l$ there is a large enough $t_0$ such that the maximal value of \eqref{EQUATION two summands} is achieved with $h = c$ for all $t \ge t_0$. 
To finish this proof just recall that we denoted the exponent of $\gamma_c(A)$ by $p^\alpha$.
\end{proof}

\vskip2mm
To introduce our second group we need a finitely generated (and, in fact, also finite, since it is in a locally finite variety) subgroup $\tilde A$ of $A$ such that the exponents of terms $\gamma_h(\tilde A)$ and $\gamma_h(A)$ are equal for each $h = 1, \ldots, c$. Clearly, the nilpotency classes of $\tilde A$ and of $A$ will then be equal. 

Notice that each term $\gamma_h(A)$ contains such an element $a_h$, the exponent of which is equal to the exponent $p^{s(h)}$ of $\gamma_h(A)$. This is possible, since $A$ is a $p$-group of finite exponent. Since $a_h$ is an element of the verbal subgroup $\gamma_h(A)$ for the word $\gamma_h(x_1,\ldots, x_h)$, there are some finitely many elements $a_{h,1}, \ldots, a_{h,r_h}\in A$ such that 
$a_h \in \gamma_h \left(\langle a_{h,1}, \ldots, a_{h,r_h}\rangle \right)$. Collecting these finitely many generators for all $h$, we get the group 
$$
\tilde A = \langle a_{h,i}, \ldots, a_{h,r_h} \,|\, h = 1, \ldots, c \, \rangle,
$$
which does have the property we needed. Assume $\tilde A$ is a $z$-generator group
and denote by $Y(z,t)$ the product:
\begin{equation}
\label{EQUATION A Wr Y definition}    
Y(z,t) =\underbrace{C_{p^v} \times \cdots \times C_{p^v}}_{t-z}.
\end{equation}

Then we have the following value for the nilpotency class of the wreath product $\tilde A \Wr Y(z,t)$:

\begin{Lemma}
\label{LEMMA bound for tilde A Wr Z}
Assume $A$, $\tilde A$, $z$  and $\alpha$ are defined as above. Then there is a positive integer $t_1$ such that for all $t > t_1$ the nilpotency class of the wreath product
$\tilde A \Wr Y(z,t)$  is equal to
\begin{equation}
\label{EQUATION bound 2}    
c + c(t-z)(p-1) \frac{1 - p^{v}}{\hskip-1mm 1-p}
+ (\alpha-1)(p-1)p^{v-1}.
\end{equation}
\end{Lemma}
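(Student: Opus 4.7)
The plan is to follow exactly the Shield's-formula argument used in Lemma~\ref{LEMMA bound for A beta Wr Z}: compute the $K_p$-series parameters $a$, $b$, $d$, $e(s)$ for the abelian group $Y(z,t)$, read off the $s(h)$ for $\tilde A$, and then plug into~\eqref{EQUATION A wr B class}. First I would note that $\tilde A \Wr Y(z,t)$ is nilpotent by Baumslag's theorem, since $\tilde A$ is a finite nilpotent $p$-group of finite exponent and $Y(z,t)$ is a finite abelian $p$-group.

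The $K_p$-series of $Y(z,t) = C_{p^v}^{t-z}$ is even simpler than that of $Z(l,t)$, because all direct factors are of the same exponent: for each $k = 0, \ldots, v-1$ and each $i$ with $p^k < i \le p^{k+1}$ one has $K_{i,p}(Y(z,t)) = Y(z,t)^{p^{k+1}} \cong C_{p^{v-k-1}}^{t-z}$, and $K_{i,p}(Y(z,t)) = \{1\}$ for $i > p^{v-1}$. This yields $d = p^{v-1}$, $e(p^k) = t - z$ for $k = 0, \ldots, v-1$, and $e(s) = 0$ otherwise. Summing the geometric progression gives
$$
a = 1 + (p-1)(t-z)\left(1 + p + \cdots + p^{v-1}\right) = 1 + (t-z)(p-1)\frac{1 - p^v}{1 - p},
$$
while $b = (p-1) p^{v-1}$.

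By the defining property of $\tilde A$ the exponents of $\gamma_h(\tilde A)$ and $\gamma_h(A)$ agree for every $h = 1, \ldots, c$, so the $s(h)$ parameters for $\tilde A$ are the same as those for $A$; in particular $s(c) = \alpha$. Shield's formula therefore expresses the nilpotency class of $\tilde A \Wr Y(z,t)$ as $\max_{h = 1, \ldots, c} \bigl(ah + (s(h) - 1)b\bigr)$. Exactly as in Lemma~\ref{LEMMA bound for A beta Wr Z}, the second summand takes at most $c$ values independent of $t$, whereas $ah$ grows linearly in $t$ and, for $a$ large enough, strictly dominates the increments produced by smaller $h$; hence there is some $t_1$ such that for every $t > t_1$ the maximum is attained at $h = c$. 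Substituting $h = c$ and $s(c) = \alpha$ then produces exactly~\eqref{EQUATION bound 2}. The main (and only real) obstacle is the mechanical bookkeeping of the $K_p$-series calculation, together with a re-run of the dominance argument already used in the proof of Lemma~\ref{LEMMA bound for A beta Wr Z}; no essentially new idea is required.
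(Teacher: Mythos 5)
Your proposal is correct and follows essentially the same route as the paper's own proof: the same $K_p$-series computation for $Y(z,t)$ giving $d = p^{v-1}$, $e(p^k) = t-z$ for $k = 0,\ldots,v-1$, the same values of $a$ and $b$, and the same dominance argument showing the maximum in Shield's formula is attained at $h = c$ for all sufficiently large $t$. No discrepancies to report.
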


\begin{proof}
Denote $Y = Y(z,t)$ and notice that $\tilde A \Wr Y$ is nilpotent. 
Let us compute the $K_{p}$-series for $Y$ by the same routine procedure as in previous proof.
For $i=1$ we have $K_{1,p}(Y) = Y$. 
For $i=2, \ldots, p$ we have:
$$
K_{i,p}(Y) = Y^p =C_{p^{v-1}} \times \cdots \times C_{p^{v-1}}.
$$
For $i=p^k+1, \ldots, p^{k+1}$ we have:
$$
K_{i,p}(Y) = Y^{p^{k+1}} =C_{p^{v-(k+1)}} \times \cdots \times C_{p^{v-(k+1)}}.
$$
In particular, for $i=p^{v-3}+1, \ldots, p^{v-2}$ we get:
$$
K_{i,p}(Y) = Y^{p^{v-2}} =C_{p^2} \times \cdots \times C_{p^2},
$$
for $i=p^{v-2}+1, \ldots, p^{v-1}$ we get:
$$
K_{i,p}(Y) = Y^{p^{v-1}} =C_{p} \times \cdots \times C_{p},
$$
and, finally, for $i=p^{v-1}+1$ we get $K_{i,p}(Y) = \1$.

Again, $d = p^{v-1}$ and the only non-zero parameters $e(i)$ are:
$$
e(1) = e(p) = \ldots = e(p^{u-1}) = t-z.
$$
Thus:
$$
a = 1 + (p-1)\Large\left( (t-z) + p (t-z) + \cdots + p^{v-1} (t-z) \Large\right) 
$$
$$
= 1 + (t-z)(p-1) 
\frac{1 - p^{v}}{\hskip-1mm 1-p}
$$
and
$$
b = (p-1)d=(p-1)p^{v-1}.
$$

Here the parameters $s(h)$, $h = 1, \ldots, c$,  are the same for $\tilde A$ and $A$, so the nilpotency class of $\tilde A \Wr Y$ is, like in previous proof, the maximum of values 
\begin{equation}
\label{EQUATION two summands second}    
a \, h + (s(h)-1)b, \quad h = 1, \ldots, c,
\end{equation}
where $p^{s(h)}$ is the exponent of $\gamma_h(A)$. 
This exponent for some $h < c$ may be so much larger than the exponent of $\gamma_c(A)$ that for the given $t$  the highest value of \eqref{EQUATION two summands second} is achieved not for $h = c$. However, there is a large enough $t_1$ such that the maximal value of \eqref{EQUATION two summands second} is achieved with $h = c$ for all $t \ge t_1$. Thus, we can assume $h=c$ in formula \eqref{EQUATION two summands second} with $s(c)  = \alpha$.
\end{proof}

\begin{Remark}
Clearly, it would be possible to compute the exact values for the limits $t_0$ and $t_1$ in Lemma~\ref{LEMMA bound for A beta Wr Z} and Lemma~\ref{LEMMA bound for tilde A Wr Z}. However, that would bring nothing but lengthy calculations, since the exact values of $t_0$ and $t_1$ are not relevant for the rest.
\end{Remark}

\vskip2mm

Before we proceed to the proof of Theorem~\ref{Theorem wr p-groups} let us bring two technical lemmas, where we group a few facts, which either are known in the literature, or are proved by us earlier (see Proposition 22.11 and Proposition 22.13 in \cite{HannaNeumann}, Lemma 1.1 and Lemma 1.2 in \cite{AwrB_paper} and also \cite{ShmelkinOnCrossVarieties}). The proofs can be found in  \cite{AwrB_paper}, and we bring these lemmas here without arguments:

\begin{Lemma}
\label{X*WrY_belongs_var}
For arbitrary classes $\X$ and $\Y$ of groups and for arbitrary groups $X^*$ and $Y$, where either $X^*\in 
{\sf Q}\X$, or $X^*\in {\sf S}\X$, or $X^*\in {\sf C}\X$, and where $Y\in \Y$, the group $X^* \Wr
Y$  belongs to the variety $\var{\X \Wr \Y}$.
\end{Lemma}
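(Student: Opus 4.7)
The plan is to handle the three cases $X^* \in {\sf Q}\X$, $X^* \in {\sf S}\X$, $X^* \in {\sf C}\X$ separately, in each case realising $X^* \Wr Y$ as a quotient, a subgroup, or a subgroup of a Cartesian product of groups of the form $X \Wr Y$ with $X \in \X$. Once this is achieved, membership in $\var{\X \Wr \Y}$ follows from the Birkhoff description $\var{\X \Wr \Y} = {\sf QSC}(\X \Wr \Y)$ recalled in the introduction.

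First, suppose $X^* = X/N$ with $X \in \X$ and $N \normal X$. The quotient map $X \to X/N$ induces a coordinate-wise surjection of base groups $X^Y \twoheadrightarrow (X/N)^Y$ which is equivariant under the shift action of the top group $Y$; combined with the identity on $Y$ this yields a surjective homomorphism $X \Wr Y \twoheadrightarrow X^* \Wr Y$, and so $X^* \Wr Y \in {\sf Q}(\X \Wr \Y) \sub \var{\X \Wr \Y}$.

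Second, if $X^* \leq X$ with $X \in \X$, then $(X^*)^Y \leq X^Y$ and this inclusion is again $Y$-equivariant, giving a natural embedding $X^* \Wr Y \hookrightarrow X \Wr Y$; hence $X^* \Wr Y \in {\sf S}(\X \Wr \Y) \sub \var{\X \Wr \Y}$.

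The third and least routine case is $X^* = \prod_\lambda X_\lambda$ with $X_\lambda \in \X$. Here I would exhibit a natural embedding
$$
\Big(\prod_\lambda X_\lambda\Big) \Wr Y \hookrightarrow \prod_\lambda \big(X_\lambda \Wr Y\big),
$$
sending each base-group tuple $(x_y)_{y \in Y}$ to the family of its $\lambda$-projections $\big((x_y^{(\lambda)})_{y \in Y}\big)_\lambda$, and each top-group element $y_0 \in Y$ to the diagonal copy of itself in every factor $X_\lambda \Wr Y$. The key verification is that this map is a well-defined homomorphism, which reduces to checking that the shift action of $Y$ on the base group commutes with projection onto each coordinate $\lambda$; since the shift is itself defined componentwise, this is immediate. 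Because every factor $X_\lambda \Wr Y$ belongs to $\X \Wr \Y$, the Cartesian product on the right belongs to ${\sf C}(\X \Wr \Y)$ and the image of the embedding to ${\sf SC}(\X \Wr \Y) \sub \var{\X \Wr \Y}$. The only genuine obstacle is precisely this third step: one must be careful that Cartesian (rather than direct) wreath products are used uniformly on both sides, so that the embedding really lands in the full Cartesian power and not only in its restricted subgroup.
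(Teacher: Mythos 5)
Your proof is correct, and it follows the same route as the sources the paper relies on: the paper itself states this lemma without argument, referring to Propositions 22.11 and 22.13 of Hanna Neumann's monograph and to Lemmas 1.1--1.2 of the author's earlier paper, where precisely your three reductions appear --- $X^*\Wr Y$ as a quotient of $X\Wr Y$ via the coordinatewise surjection of base groups, as a subgroup of $X\Wr Y$, and, in the Cartesian case, embedded into $\prod_\lambda\left(X_\lambda\Wr Y\right)$ with $Y$ mapped diagonally, followed by Birkhoff's $\var{\X\Wr\Y}={\sf QSC}(\X\Wr\Y)$. Your closing caution about using Cartesian wreath products uniformly in the third step is apt and consistent with the paper's standing convention of Cartesian wreath products.
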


\begin{Lemma}
\label{XWrY*_belongs_var}
For arbitrary classes $\X$ and $\Y$ of groups and for arbitrary groups $X$ and $Y^*$, where $X\in\X$ and 
where $Y^*\in {\sf S}\Y$, the group $X \Wr Y^*$  belongs to the variety $\var{\X \Wr \Y}$.
Moreover, if  $\X$ is a class of abelian groups, then for each  $Y^*\in {\sf Q}\Y$ the group $X
\Wrr Y^*$  also belongs to $\var{\X \Wr \Y}$.
\end{Lemma}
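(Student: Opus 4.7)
The plan is to mirror almost verbatim the proof of Lemma~\ref{LEMMA bound for A beta Wr Z}, now applied to the wreath product $\tilde A \Wr Y(z,t)$. The calculation is in fact slightly simpler than before, since $Y = Y(z,t) = C_{p^v}^{t-z}$ is a \emph{homogeneous} abelian group --- all of its direct factors have order $p^v$ --- so its $K_p$-series has no mixed-exponent layers like the ones that forced the $l/t$ correction in the preceding lemma.

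Concretely, because $Y$ is abelian only powers of $Y$ appear in its $K_p$-series. A straightforward layer-by-layer computation gives $K_{1,p}(Y) = Y$ and, for indices $p^k + 1 \le i \le p^{k+1}$ with $0 \le k \le v-1$,
\[
K_{i,p}(Y) \;=\; Y^{p^{k+1}} \;\cong\; C_{p^{v-k-1}}^{t-z},
\]
with the series terminating at $K_{p^{v-1}+1,p}(Y) = \{1\}$. Thus $d = p^{v-1}$, the only non-zero jump exponents are $e(1) = e(p) = \cdots = e(p^{v-1}) = t-z$, and consequently
\[
a = 1 + (p-1)(t-z)(1 + p + \cdots + p^{v-1}) = 1 + (t-z)(p-1)\tfrac{1-p^{v}}{1-p}, \qquad b = (p-1)p^{v-1}.
\]

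Next I would invoke the defining property of $\tilde A$: by construction $\gamma_h(\tilde A)$ has the same exponent $p^{s(h)}$ as $\gamma_h(A)$ for every $h = 1, \ldots, c$, so the parameters $s(h)$ entering Shield's formula~\eqref{EQUATION A wr B class} are those of the original group $A$. The product $\tilde A \Wr Y$ is nilpotent by Baumslag's theorem (since $\tilde A$ is a nilpotent $p$-group of finite exponent and $Y$ is a finite $p$-group), so Shield's formula expresses its nilpotency class as $\max_{h=1,\ldots,c}\{ah + (s(h)-1)b\}$. Exactly as in the proof of Lemma~\ref{LEMMA bound for A beta Wr Z}, the second summand takes at most $c$ distinct values independent of $t$, while $a$ grows strictly with $t$, and therefore there is a threshold $t_1$ beyond which the maximum is realized at $h = c$. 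Substituting $h = c$ and $s(c) = \alpha$ yields precisely the target expression \eqref{EQUATION bound 2}.

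The only real obstacle is the implicit choice of $t_1$: one needs to check that $a(c - h) \ge (s(h) - \alpha)b$ for every $h < c$ once $t$ is large enough. This is immediate because $s(h)$ is bounded above by the (fixed) exponent of $A$, while $a$ depends linearly on $t$ and grows without bound. Everything else --- the $K_p$-series computation, the $h = c$ reduction, and the identification $s(h)_{\tilde A} = s(h)_A$ --- is either a direct specialization of the previous lemma or a built-in consequence of how $\tilde A$ was chosen, so no new idea beyond the scheme of Lemma~\ref{LEMMA bound for A beta Wr Z} is required.
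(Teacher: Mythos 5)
Your text does not address the statement at hand: what you have written is a proof of Lemma~\ref{LEMMA bound for tilde A Wr Z} (the computation of the nilpotency class of $\tilde A \Wr Y(z,t)$ via the $K_p$-series of $Y(z,t)$ and Shield's formula), whereas the statement under review is Lemma~\ref{XWrY*_belongs_var}, a general closure property of the variety $\var{\X \Wr \Y}$ under passing to subgroups (and, for abelian $\X$, quotients) in the passive factor. Nothing in your argument touches that claim: you never show $X \Wr Y^* \in \var{\X \Wr \Y}$ for $Y^* \in {\sf S}\Y$, and the ``moreover'' clause about $Y^* \in {\sf Q}\Y$ and the \emph{direct} wreath product $X \Wrr Y^*$ --- the one place where abelianness of $\X$ is actually needed --- is not mentioned at all. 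The $K_p$-series machinery is structurally the wrong tool here: Lemma~\ref{XWrY*_belongs_var} holds for completely arbitrary classes of groups, with no nilpotency, exponent, or $p$-group hypotheses, so no argument resting on Shield's formula can prove it. (For the record, the paper itself states this lemma without proof, referring to Lemma 1.1 and Lemma 1.2 of \cite{AwrB_paper} and to Propositions 22.11 and 22.13 of \cite{HannaNeumann}.)

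What a correct proof requires is roughly this. For $Y^* \le Y \in \Y$, consider the subgroup of $X \Wr Y$ generated by the base group $X^Y$ together with $Y^*$; the restriction map $f \mapsto f|_{Y^*}$ is a surjective homomorphism $X^Y \to X^{Y^*}$ that commutes with the action of $Y^*$ (since $Y^*$ maps each of its own cosets to itself under translation by elements of $Y^*$), so it extends to a surjection of that subgroup onto $X \Wr Y^*$; hence $X \Wr Y^* \in {\sf QS}\{X \Wr Y\} \sub \var{\X \Wr \Y}$. For the second clause, with $Y^* = Y/N$ and $X$ abelian, map the restricted power $X^{(Y)}$ onto $X^{(Y/N)}$ by multiplying, for each coset of $N$, the finitely many nontrivial coordinates lying in that coset; commutativity of $X$ is exactly what makes this coordinate-collecting map a well-defined homomorphism, it is equivariant with respect to $Y \to Y/N$, and therefore $X \Wrr Y^*$ is a homomorphic image of $X \Wrr Y \le X \Wr Y$. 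Your $K_p$-series computation is fine as far as it goes --- it essentially reproduces the paper's own proof of Lemma~\ref{LEMMA bound for tilde A Wr Z}, including the reduction to $h = c$ for large $t$ --- but as a proof of Lemma~\ref{XWrY*_belongs_var} it is a complete miss.
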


Now we can prove the main statement:

\begin{proof}[Proof of Theorem~\ref{Theorem wr p-groups}]
That the condition of the theorem is sufficient is easy to deduce from the discriminating properties of $C_{p^v}^\infty$ (see \cite{B+3N} or Corollary 17.44 in \cite{HannaNeumann}). Since $B$ and $C_{p^v}^\infty$ generate the same variety $\A_{p^v}$ and, since $C_{p^v}^\infty$ is isomorphic to a subgroup of $B$, then by \cite[17.44]{HannaNeumann} $B$ also discriminates $\A_{p^v}$. It remains to apply Baumslag's theorem: since $B$ discriminates $\var{B} =  \A_{p^v}$, the wreath product $A \Wr B$ discriminates and, thus, generates the product $\var{A} \A_{p^v}$ (see  \cite{B+3N} or the statements 22.42, 22.43, 22.44 in \cite{HannaNeumann}).

Turning to the proof of necessity of the condition suppose the group $B$ contains no subgroup isomorphic to $C_{p^v}^\infty$. By Pr\"ufer-Kulikov's theorem~\cite{Robinson, Kargapolov Merzlyakov} $B$ is a direct product of some (probably infinitely many) finite cyclic subgroups, the orders of which all are some powers of $p$. Since $B$ is of exponent $p^v$, all these orders are bounded by $p^v$, and there is at least one factor isomorphic to $C_{p^v}$. By assumption, there are only finitely many, say $l$, such factors, and collecting them together, we get $B = B_1 \times B_2$, where  
$$
\text{$B_1 = C_{p^v} \times \cdots \times C_{p^v}$    \quad     ($l$ factors),}
$$ 
and where $B_2$ is a direct product of some cycles of orders not higher than $p^{v-1}$.

Take an arbitrary $t$-generator group $G$ in variety $\varr{A \Wr B}$. By \cite[16.31]{HannaNeumann} $G$ is in variety generated by all the $t$-generator subgroups of $A \Wr B$.
Assume $T$ is one of such $t$-generator subgroups and denote by $H$ its intersection with the base subgroup $A^B$ of $A \Wr B$. Then 
$$
T / H \cong (T A^B) / A^B \le (A \Wr B)/ A^B \cong B
$$ and, thus, $T$ is an extension of $H$ by means of an at most $t$-generator subgroup $B'$ of $B= B_1 \times B_2$. By Kaloujnine-Krasner's theorem \cite{KaloujnineKrasner} the group $G$ is embeddable into $H \Wr B'$ (see also~\cite{Ol'shanskii Kaluzhnin - Krasner}). 

The group $B'$ is a direct product of at most $t$ cycles, of which at most $l$ cycles are of order $p^v$, and the rest are of strictly lower orders. So $B'$ is isomorphic to a subgroup of $Z(l,t)$ for a suitable $t$. Since $H$ is a subgroup in $A^\beta$, we can apply Lemma~\ref{X*WrY_belongs_var} and Lemma~\ref{XWrY*_belongs_var} to get that 
$$
H \Wr B \in \var{A^\beta \Wr Z(l,t)}.
$$
According to Lemma~\ref{LEMMA bound for A beta Wr Z} we get that the nilpotency class of $H \Wr B$ and of $T$ are bounded by formula \eqref{EQUATION bound 1} for all $t > t_0$. 

\vskip3mm
Our proof will be completed if we discover a $t$-generator  group in $\var{A} \var{B}$, with nilpotency class higher than  \eqref{EQUATION bound 1}, at lest for some  $t$. 

The group  $\tilde A \Wr Y(z,t)$ of Lemma~\ref{LEMMA bound for tilde A Wr Z} is $t$-generator, because $\tilde A$ is a $z$-generator group. For sufficiently large $t > t_1$ the nilpotency class of this group is given by formula \eqref{EQUATION bound 2}. To compare the values of \eqref{EQUATION bound 1}  and \eqref{EQUATION bound 2} notice that they both consist of three summands, of which the first and the third are the same in both formulas. Let us compare the second summands in \eqref{EQUATION bound 1} and in \eqref{EQUATION bound 2}. After we eliminate the common multiplier $p-1$ in both of them, we have:
\begin{equation}
\label{EQUATION compare 1}    
c\,t\left( 
\frac{1 - p^{v-1}}{\hskip-4mm 1-p} +l/t \cdot p^{v-1}
\right) 
= c\,t \, \frac{1 - p^{v-1}}{\hskip-4mm 1-p} + c \, l \, p^{v-1}
\end{equation}
and 
\begin{equation*}
\label{EQUATION compare 2} 
c(t-z) \frac{1 - p^{v}}{\hskip-1mm 1-p}  = c(t-z) \left(  \frac{1 - p^{v-1}}{\hskip-4mm 1-p} + p^{v-1} \right) 
\end{equation*}
\begin{equation}
\label{EQUATION compare 3} 
=  c\,t \frac{1 - p^{v-1}}{\hskip-4mm 1-p} 
+ c\,t \, p^{v-1} 
- cz \frac{1 - p^{v-1}}{\hskip-4mm 1-p}
- c z \, p^{v-1}.
\end{equation}
The summand $c\,t \, \frac{1 - p^{v-1}}{\hskip-4mm 1-p}$ is the same in \eqref{EQUATION compare 1}  and  \eqref{EQUATION compare 3}, so we can eliminate it also, and just compare the remaining expressions: 
\begin{equation}
\label{EQUATION compare 4} 
c \, l \, p^{v-1}
\quad \text{and} \quad
c\,t \, p^{v-1} 
- cz \frac{1 - p^{v-1}}{\hskip-4mm 1-p}
- c z \, p^{v-1}.
\end{equation}
Since $c, l$ and $v$ are fixed, the left-hand side of \eqref{EQUATION compare 4} is a positive constant. Since $z$ also is fixed, the second and third summands on the right-hand side of \eqref{EQUATION compare 4} are some negative constants, which make the sum smaller. But, whatever these negative constants be, the other summand $c\,t \, p^{v-1} $ in \eqref{EQUATION compare 4} grows infinitely as $t$ grows. 

So for sufficiently large $t^*$ the value of \eqref{EQUATION compare 3}  is larger than that of  \eqref{EQUATION compare 1}  (if necessary, we may also take $t^* > t_0, \, t_1$).
Thus, the nilpotency class of the $t$-generator group  $\tilde A \Wr Y(z,t)$ from the variety $\var{A} \var{B}$ is higher than the maximum of the nilpotency classes of the $t$-generator groups in $\var{A \Wr B}$ for all  $t > t^*$. So $\tilde A \Wr Y(z,t)$ does not belong to $\var{A \Wr B}$, and the proof of the theorem is completed.
\end{proof}

It would not be hard to compute the exact value for $t^*$ in the proof above. We omit it to avoid routine calculations. 

\begin{Remark}
The reader may compare the proofs in this note with proofs in Section 4 in~\cite{AwrB_paper} or Section 6  in~\cite{Metabelien}, where we considered similar problem for wreath products of {\it abelian} $p$-groups. That time we used the specially defined functions $\lambda (A, B, t)$, and for bounds on nilpotency classes of wreath products of abelian groups we applied Liebeck's formula~\cite{Liebeck_Nilpotency_classes}. As one may notice, in \cite{AwrB_paper, Metabelien} we had a by far simpler situation than what we discussed in Lemma~\ref{LEMMA bound for A beta Wr Z} and  Lemma~\ref{LEMMA bound for tilde A Wr Z}.
\end{Remark}

Turning to the examples of usage of Theorem~\ref{Theorem wr p-groups} notice that Example 4.6 in~\cite{AwrB_paper}  and Example 6.9 in~\cite{Metabelien} already are illustrations of Theorem~\ref{Theorem wr p-groups}, since they consider wreath products of abelian $p$-groups of finite exponents. As an example with a nilpotent, non-abelaian passive group we may consider:

\begin{Example}The dihedral group $D_4$ is of nilpotency class $2$. Its order is $8$ and the exponent is $4=2^2$. L.G.~Kov{\'a}cs in \cite{Kovacs dihedral} has computed the variety it generates: $\var {D_4}= \A_2^2 \cap \Ni_2$. That for any finite $2$-group $B$ the wreath product $D_4 \Wr B$ does not generate the product $\var{D_4} \var {B} = (\A_2^2 \cap \Ni_2 ) \var {B}$ is clear from the fact that $D_4 \Wrr B$ is a nilpotent group, whereas no product variety may be nilpotent (if both factors are non-trivial). Now take $B$ to be an infinite abelian group of exponent, say, $2^v$. By Theorem~\ref{Theorem wr p-groups}  
$$
D_4 \Wr B = (\A_2^2 \cap \Ni_2 ) \var {B} = (\A_2^2 \cap \Ni_2 ) \A_{p^v}
$$ 
holds if and only if $B$ contains a subgroup isomorphic to $C_{p^v}^\infty$. In particular, if 
$$
B = \underbrace{C_{p^v} \times \cdots \times C_{p^v}}_{n}   
\,\, \times \,\, 
\underbrace{C_{p^{v-1}} \times \cdots \times C_{p^{v-1}} \times \cdots}_{\infty},
$$
then $D_4 \Wr B$ does not generate $(\A_2^2 \cap \Ni_2 ) \A_{p^v}$. Moreover, it will not generate it even if we add to $B$ the ``large'' direct factor 
$$
\underbrace{C_{p^{v-2}} \times \cdots \times C_{p^{v-2}} \times \cdots}_{\infty}
\,\, \times \cdots \times \,\, 
\underbrace{C_{p} \times \cdots \times C_{p} \times \cdots}_{\infty}
.
$$
\end{Example}

The quaternion group $Q_8$ of order eight generates the same variety as $D_4$ (see~\cite{HannaNeumann}), and it also is nilpotent of class $2$. So a similar example can be constructed for this group also.

\vskip10mm

\vskip3mm

{\small
\noindent
Yerevan State University, Alex Manoogian 1, Yerevan 0025, Armenia. Email: vmikaelian@ysu.am.
\vskip2mm 

\noindent
American University of Armenia, 40 Marshal Baghramyan Ave., Yerevan 0019, Armenia. Email: vmikaelian@aua.am.
}

\end{document}